\DeclareMathOperator*{\W}{Wig}
\newtheorem{Th}{Theorem}[section]
\newtheorem{Lemma}[Th]{Lemma}
\newtheorem{Def}[Th]{Definition}
\newtheorem{Prop}[Th]{Proposition}
\title{Compactness of localization operators on modulation spaces of $\omega$-tempered distributions}
 \author{Chiara Boiti \and Antonino De Martino}
\begin{document}

\maketitle
\markboth{Compactness of localization operators on modulation spaces...}{Chiara Boiti and Antonino De Martino}

\begin{abstract}
We give sufficient conditions for compactness of localization operators on modulation spaces $ \textbf{M}^{p,q}_{m_{\lambda}}( \mathbb{R}^{d})$ of
$\omega$-tempered distributions whose short-time Fourier transform is in the weighted
mixed space $L^{p,q}_{m_\lambda}$ for  $m_\lambda(x)=e^{\lambda\omega(x)}$.
\end{abstract}

\section{Introduction and main results}
In this paper we study some properties of localization operators, which are 
pseudo-differential operators of time-frequency analysis suitable for 
applications to the reconstruction of signals, because they allow to recover a 
filtered version of the original signal. To introduce the problem, let us recall
the \emph{translation} and \emph{modulation} operators
\begin{eqnarray*}
T_{x}f(y)=f(y-x), \quad M_{\xi}f(y)=e^{i y \cdot \xi}f(y), \qquad  x,y \in \mathbb{R}^{d},
\end{eqnarray*}
and, for a window function $ \psi \in L^{2}(\mathbb{R}^{d})$, the \emph {short-time Fourier transform} (briefly STFT) of a function $ f \in L^{2}(\mathbb{R}^{d})$
$$ 
V_{\psi} f(z)= \langle f, M_{\xi}T_{x}\psi \rangle =\int_{\mathbb{R}^{d}}f(y) \overline{\psi(y-x)}e^{-iy \cdot \xi} \, dy,  \qquad z=(x,\xi) \in \mathbb{R}^{2d}.
$$
With respect to the inversion formula for the STFT (see \cite[Cor.~3.2.3]{G})
$$
f= \frac{1}{(2 \pi)^{d} \langle \gamma, \psi \rangle} \int_{\mathbb{R}^{2d}} V_{\psi}f(x, \xi)M_{\xi}T_{x} \gamma\,  dx d \xi,
$$
which gives a reconstruction of the signal $f$, the 
localization operator, as defined in \eqref{local}, modifies $V_{\psi}f(x,\xi)$ by multiplying it by a suitable $a(x,\xi)$ before reconstructing the signal, so that a filtered version of the original signal $f$ is recovered.

Another important operator in time-frequency analysis that we shall need in the following is the \emph{cross-Wigner transform} defined, for $f,g \in L^{2}(\mathbb{R}^{d})$, by
$$ 
\W(f,g)(x, \xi)= \int_{\mathbb{R}^{d}} f \bigl(x+ \frac{t}{2} \bigl) \overline{g \bigl(x- \frac{t}{2} \bigl)}  e^{-i  \xi\cdot t } \, dt \qquad x, \xi \in \mathbb{R}^{d}.
$$
The {\em Wigner transform} of $f$ is then defined by $\W f:= \W(f,f)$.

The above Fourier integral operators,
with standard generalizations to more general spaces of functions or distributions,
 have been largely investigated in time-frequency analysis. In particular, results about boundedness or compactness related to the subject of this paper can be
found, for instance, in \cite{BGHO,CG,FG,FG2,FGP,P,T}.

Inspired by \cite{CG,FG}, our aim in this paper is to study boundedness of localization operators on modulation spaces in the setting of $\omega$-tempered distributions,
 for a weight functions $ \omega$ defined as below:
\begin{Def}
\label{peso}
A \emph{non-quasianalytic subadditive weight function} is a continous increasing function 
$ \omega: [0, + \infty) \to [0, + \infty)$ satisfying the following properties:
\end{Def}
\begin{itemize}
\item[($ \alpha$)]
 $ \ \ \omega(t_{1}+t_{2}) \leq \omega(t_{1})+ \omega(t_{2}), \qquad \forall t_{1}, t_{2} \geq 0;$
\item[($ \beta $)]
 $ \ \  \int_{1}^{+\infty} \frac{\omega(t)}{t^{2}} \, dt < + \infty ;$
\item[($ \gamma$)]
 $ \ \ \exists A \in \mathbb{R}$, $B>0$ s.t $ \omega(t) \geq A+B \log(1+t), \qquad \forall t \geq 0;$
\item[($\delta$)]
 $ \ \ \varphi_{\omega}(t):= \omega(e^{t})$ is convex.
\end{itemize}
We then consider $ \omega(\xi):= \omega(| \xi|)$ for $ \xi \in \mathbb{C}^{d}$.

\begin{Def}
\label{SW2}
The space $ \mathcal{S}_{\omega}(\mathbb{R}^{d})$ is defined as the set of all $ u \in L^{1}(\mathbb{R}^{d})$ such that $ u, \hat{u} \in C^{\infty}(\mathbb{R}^{d})$  and
\end{Def}
\begin{itemize}
\item[(i)] 
$ \ \forall \lambda>0, \alpha \in \mathbb{N}^{d}_{0}$: $ \sup_{x \in \mathbb{R}^{d}} e^{\lambda \omega(x)} |D^{\alpha}u(x)| < + \infty ,$
\item[(ii)] 
$\ \ \forall \lambda>0, \alpha \in \mathbb{N}^{d}_{0}$: $ \sup_{\xi \in \mathbb{R}^{d}} e^{\lambda \omega(\xi)} |D^{\alpha}\hat{u}(\xi)| < + \infty,$
\end{itemize}
where $ \mathbb{N}_{0}:= \mathbb{N} \cup \{ 0 \}$.

\vspace{2mm}
Note that for $\omega(t)= \log(1+t)$ we obtain the classical Schwartz class $ \mathcal{S}(\mathbb{R}^{d})$, while in general $ \mathcal{S}_{\omega}(\mathbb{R}^{d}) \subseteq \mathcal{S}(\mathbb{R}^{d})$.
For more details about the spaces $ \mathcal{S}_{\omega}(\mathbb{R}^{d})$ we refer to \cite{BJO1}-\cite{BJOS}. In particular, we can define on $ \mathcal{S}_{\omega}(\mathbb{R}^{d})$ different equivalent systems of seminorms that make 
$ \mathcal{S}_{\omega}(\mathbb{R}^{d})$ a 
Fr\'echet nuclear space. It is also an algebra under multiplication and convolution.

The corresponding strong dual space is denoted by 
$ \mathcal{S}'_{\omega}(\mathbb{R}^{d})$ and its elements are called {\em $\omega$-tempered distributions}. Moreover, $ \mathcal{S}' (\mathbb{R}^{d})\subseteq \mathcal{S}'_{\omega}(\mathbb{R}^{d})$ and the Fourier Transform, the short-time Fourier transform and the Wigner transform are continous from 
$ \mathcal{S}_{\omega}(\mathbb{R}^{d})$ to $ \mathcal{S}_{\omega}(\mathbb{R}^{d})$ and from $ \mathcal{S}'_{\omega}(\mathbb{R}^{d})$ to 
$ \mathcal{S}'_{\omega}(\mathbb{R}^{d})$.

 The "right" function spaces in time-frequency analysis to work with the STFT are the so-called {\em modulation spaces}, introduced by H.~Feichtinger in \cite{F}. In this context, 
 we consider the weight $m_\lambda(z):=e^{\lambda\omega(z)}$, for $\lambda\in\mathbb R$, and
 define $L^{p,q}_{m_{\lambda}}(\mathbb{R}^{2d})$ as the space of measurable functions $f$ 
 on $\mathbb R^{2d}$ such that
 $$
 \| f \|_{L^{p,q}_{m_{\lambda}}}:= \int_{\mathbb{R}^{d}} \biggl( \int_{\mathbb{R}^{d}} |f(x, \xi)|^{p} m_{\lambda}(x, \xi)^{p} \, dx \bigl)^{\frac{q}{p}} \, d \xi \biggl)^{\frac{1}{q}}< + \infty,
 $$
for $ 1 \leq p,q < + \infty$, with standard changes if $p$ (or $q$) is $ + \infty$. We define then, for $1 \leq p,q \leq + \infty$, the modulation space
$$ 
\textbf{M}_{m_{\lambda}}^{p,q}(\mathbb{R}^{d}):= \{ f \in \mathcal{S}'_{\omega}(\mathbb{R}^{d}): V_{\varphi}f \in L^{p,q}_{m_{\lambda}}(\mathbb{R}^{2d}) \},
$$
which is independent of the window function 
$ \varphi \in \mathcal{S}_{\omega}(\mathbb{R}^{d}) \setminus \{ 0 \}$ and is a Banach 
space with norm 
$ \| f \|_{\textbf{M}_{m_{\lambda}}^{\,p,q}}:= \| V_{\varphi} f \|_{L^{p,q}_{m_{\lambda}}}$ 
(see \cite{BJO2}). Moreover, for $ 1 \leq p,q <+ \infty$, the space 
$ \mathcal{S}_{\omega}(\mathbb{R}^{d})$ is a dense subspace of 
$ \textbf{M}_{m_{\lambda}}^{p,q}$ by \cite[Prop. 3.9]{BJO2}. We shall denote 
$\textbf{M}_{m_{\lambda}}^{p}(\mathbb{R}^{d})= 
\textbf{M}_{m_{\lambda}}^{p,p}(\mathbb{R}^{d})$ and 
$ \textbf{M}^{p,q}(\mathbb{R}^{d})=\textbf{M}_{m_0}^{p,q}(\mathbb{R}^{d})$.

As in \cite[Thm. 12.2.2]{G} if $ p_{1} \leq p_{2}$, $ q_{1} \leq q_{2}$, and 
$ \lambda \leq \mu$ then 
$ \textbf{M}^{\, p_{1}, q_{1}}_{m_{\mu}} \subseteq \textbf{M}^{\, p_{2}, q_{2}}_{m_{\lambda}}$
with continous inclusion (see \cite[Lemma 2.3.16]{D}).
Set
\begin{eqnarray*}
&& m_{\lambda, 1}(x):= m_{\lambda}(x,0), \quad m_{\lambda, 2}(x):= m_{\lambda}(0,\xi),\\
&&v_{\lambda}(z)=e^{| \lambda| \omega(z)}, \quad v_{\lambda, 1}(x):= v_{\lambda}(x,0), \quad v_{\lambda, 2}(x):= v_{\lambda}(0,\xi),
\end{eqnarray*}
and prove the following generalization of \cite[Prop. 2.4]{CG}:
\begin{Prop}
\label{24Gb}
Let $1 \leq p,q,r,t, t' \leq + \infty$ such that
$ \frac{1}{p}+ \frac{1}{q}-1= \frac{1}{r}$ and $\frac{1}{t}+ \frac{1}{t'}=1$.
Then, for all $ \lambda, \mu \in \mathbb{R}$ and $ 1 \leq s \leq + \infty$,
\begin{eqnarray}
\nonumber
\textbf{M}^{ \, p,st}_{m_{\lambda, 1} \otimes m_{\mu, 2}}( \mathbb{R}^{d}) * \textbf{M}^{\, q, st'}_{m_{\lambda,1} \otimes v_{\lambda,2} m_{-\mu,2}}(\mathbb{R}^{d}) \hookrightarrow \textbf{M}_{m_{\lambda}}^{ \, r,s}(\mathbb{R}^{d})
\end{eqnarray}
\begin{eqnarray}
\label{2632}
 \hspace*{-20mm}\mbox{and}\qquad\qquad \| f * g \|_{\textbf{M}_{m_{\lambda}}^{r,s}} \leq \| f \|_{\textbf{M}^{\, p,st}_{m_{\lambda, 1} \otimes m_{\mu, 2}}}  
\| g \|_{\textbf{M}^{\, q, st'}_{m_{\lambda,1} \otimes v_{\lambda,2} m_{-\mu,2}}}.
\end{eqnarray}
\end{Prop}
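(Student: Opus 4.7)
The plan is to adapt the proof of \cite[Prop.~2.4]{CG} to the $\omega$-tempered setting by combining an STFT--convolution identity with weighted Young and H\"older inequalities. Since the modulation-space norms are window-independent, I begin with a convenient factorization of the window: because $\mathcal{S}_\omega(\mathbb{R}^d)$ is an algebra under convolution, I pick $\varphi\in\mathcal{S}_\omega\setminus\{0\}$ of the form $\varphi=\varphi_1*\varphi_2$ with $\varphi_1,\varphi_2\in\mathcal{S}_\omega\setminus\{0\}$, so that $\| f*g\|_{\textbf{M}^{r,s}_{m_\lambda}}=\|V_\varphi(f*g)\|_{L^{r,s}_{m_\lambda}}$.

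The first technical step is the STFT--convolution identity. Writing $V_\varphi h(x,\xi)=e^{-ix\cdot\xi}(h*M_\xi\varphi^{*})(x)$ with $\varphi^{*}(y):=\overline{\varphi(-y)}$, and using $M_\xi(\varphi_1^{*}*\varphi_2^{*})=M_\xi\varphi_1^{*}*M_\xi\varphi_2^{*}$ together with the associativity of convolution, a direct computation yields
$$
V_\varphi(f*g)(x,\xi)=\bigl(V_{\varphi_1}f(\cdot,\xi)*V_{\varphi_2}g(\cdot,\xi)\bigr)(x),
$$
where the convolution is taken in the $x$-variable at fixed $\xi$. In particular $|V_\varphi(f*g)(x,\xi)|\leq (|V_{\varphi_1}f(\cdot,\xi)|*|V_{\varphi_2}g(\cdot,\xi)|)(x)$, which is the pointwise bound driving all subsequent convolution estimates.

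Next I perform the weight analysis. Subadditivity of $\omega$ together with a case analysis on the sign of $\lambda$ gives $m_\lambda(x,\xi)\leq m_{\lambda,1}(x)\,v_{\lambda,2}(\xi)$ for every $\lambda\in\mathbb{R}$, and the same subadditivity shows that $m_{\lambda,1}$ is $v_{\lambda,1}$-moderate. Inserting this splitting into the $L^{r,s}_{m_\lambda}$-norm of $V_\varphi(f*g)$ and applying weighted Young's inequality in $x$ with exponents satisfying $\tfrac{1}{p}+\tfrac{1}{q}=1+\tfrac{1}{r}$ yields, for each fixed $\xi$,
$$
\|V_\varphi(f*g)(\cdot,\xi)\|_{L^r_{m_{\lambda,1}}}\leq \|V_{\varphi_1}f(\cdot,\xi)\|_{L^p_{m_{\lambda,1}}}\|V_{\varphi_2}g(\cdot,\xi)\|_{L^q_{m_{\lambda,1}}}.
$$
Multiplying by $v_{\lambda,2}(\xi)$ and rewriting $v_{\lambda,2}(\xi)=m_{\mu,2}(\xi)\cdot v_{\lambda,2}(\xi)m_{-\mu,2}(\xi)$, an application of H\"older's inequality in $\xi$ with exponents $st,st'$ (which satisfy $\tfrac{1}{st}+\tfrac{1}{st'}=\tfrac{1}{s}$) splits the product into exactly the two modulation-space norms on the right-hand side of~\eqref{2632}.

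The main obstacle is the weighted Young step: one must ensure that $m_{\lambda,1}$, rather than $v_{\lambda,1}$, appears in \emph{both} factors of the Young bound. For $\lambda\geq0$ this follows at once from the submultiplicativity of $m_{\lambda,1}$. For $\lambda<0$, where $m_{\lambda,1}$ fails to be submultiplicative, the estimate is instead derived from the $v_\lambda$-moderateness of the full weight $m_\lambda$ in conjunction with the factor $v_{\lambda,2}$ that has been absorbed into the target via the splitting $m_\lambda\leq m_{\lambda,1}v_{\lambda,2}$. Once this weight bookkeeping is settled, the H\"older step and the resulting continuous inclusion are routine.
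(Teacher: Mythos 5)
Your proposal follows essentially the same route as the paper's proof: a factorized window (the paper takes $g=g_0*g_0$ with $g_0$ the Gaussian), the STFT--convolution identity $|V_{g}(f*h)(x,\xi)|=|(f*M_{\xi}g_{0})*(h*M_{\xi}g_{0})(x)|$ from \cite[Lemma 3.1.1]{G}, the splitting $m_{\lambda}(x,\xi)\leq m_{\lambda,1}(x)\,v_{\lambda,2}(\xi)$, weighted Young in $x$ at fixed $\xi$, and H\"older with exponents $t,t'$ in $\xi$ after inserting $m_{\mu,2}\,m_{-\mu,2}$. The one place you depart from the paper is in flagging the Young step for $\lambda<0$, and you are right that this is the delicate point: $m_{\lambda,1}$ is then only $v_{\lambda,1}$-moderate, not submultiplicative. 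However, your proposed remedy --- invoking the $v_{\lambda}$-moderateness of $m_{\lambda}$ together with the factor $v_{\lambda,2}$ already absorbed into the target --- is not an argument, since $v_{\lambda,2}$ lives in the $\xi$-variable and cannot compensate a convolution taken in $x$; the inequality $\|F*G\|_{L^{r}_{m_{\lambda,1}}}\leq\|F\|_{L^{p}_{m_{\lambda,1}}}\|G\|_{L^{q}_{m_{\lambda,1}}}$ genuinely needs $e^{\lambda\omega(x)}\leq e^{\lambda\omega(x-y)}e^{\lambda\omega(y)}$, which fails for $\lambda<0$. To be fair, the paper writes exactly this Young estimate with $m_{\lambda,1}$ in both factors and no comment, so it carries the same unaddressed issue; the version provable for $\lambda<0$ by this method would put $v_{\lambda,1}$ in the second factor, as in \cite[Prop.~2.4]{CG}, and since the proposition is only invoked later with $\lambda=0$ nothing downstream is affected. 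Apart from this shared caveat your argument is correct and coincides with the paper's.
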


\begin{proof}
For the Gaussian function $g_{0}(x)=e^{- \pi |x|^{2}} \in \mathcal{S}_{\omega}(\mathbb{R}^{d})$ consider on $ \textbf{M}^{r,s}_{m_{\lambda}}$ the modulation norm with respect to the window function $ g(x):=g_{0}* g_{0}(x)= 2^{-d/2} e^{- \frac{\pi}{2} | x |^{2}} \in \mathcal{S}_{\omega}(\mathbb{R}^{d})$.
Since $m_{\lambda}(x, \xi) \leq m_{\lambda}(x,0) v_{\lambda} (0, \xi)$ and $\overline{g_{0}(-x)}=g_{0}(x)$, by \cite[Lemma 3.1.1]{G}, Young and H\"older inequalities:
\begin{eqnarray*}
&& \| f *h \|_{\textbf{M}^{r,s}_{m_{\lambda}}}  = \| V_{g}(f*h) \|_{L^{r,s}_{m_{\lambda}}}= \biggl( \int_{\mathbb{R}^{d}} \biggl( \int_{\mathbb{R}^{d}} | V_{g}(f*h)|^{r} m_{\lambda}^{r} (x, \xi) \, dx \biggl)^{\frac{s}{r}} d \xi \biggl)^{\frac{1}{s}} \\
&& \leq \biggl( \int_{\mathbb{R}^{d}} \biggl( \int_{\mathbb{R}^{d}} | (f*M_{\xi}g_{0})*(h*M_{\xi}g_{0})(x)|^{r} m_{\lambda}(x,0)^{r} \, dx \biggl)^{\frac{s}{r}} v_{\lambda}^{s}(0, \xi) \, d \xi \biggl)^{\frac{1}{s}}\\
&& = \biggl( \int_{\mathbb{R}^{d}} \| (f*M_{\xi}g_{0})*(h*M_{\xi}g_{0}) \|^{s}_{L^{r}_{m_{\lambda,1}}} v_{\lambda}^{s}(0, \xi) \, d \xi  \biggl)^{\frac{1}{s}}\\
&& \leq \biggl( \int_{\mathbb{R}^{d}} \| f * M_{\xi}g_{0} \|_{L^{p}_{m_{\lambda,1}}}^{s} \| h * M_{\xi}g_{0} \|_{L^{q}_{m_{\lambda,1}}}^{s} v_{\lambda}^{s}(0, \xi) \, d \xi  \biggl)^{\frac{1}{s}}\\
&& = \biggl( \int_{\mathbb{R}^{d}} \| V_{g_{0}}f \|_{L^{p}_{m_{\lambda,1}}}^{s}  m_{\mu}^{s}(0, \xi) \| V_{g_{0}}h \|_{L^{q}_{m_{\lambda,1}}}^{s} m_{- \mu}^{s}(0, \xi) v_{\lambda}^{s}(0, \xi) \, d \xi  \biggl)^{\frac{1}{s}}\\
&& \leq \| f \|_{\textbf{M}^{p,st}_{m_{\lambda,1}\otimes m_{\mu,2}}} \| h \|_{\textbf{M}^{q, st'}_{m_{\lambda,1} \otimes v_{\lambda,2} m_{- \mu,2}}}.
\end{eqnarray*} 
\end{proof}

Given two window functions $ \psi, \gamma \in \mathcal{S}_{\omega}(\mathbb{R}^{d}) \setminus \{ 0 \}$ and a symbol $ a \in \mathcal{S}'_{\omega}(\mathbb{R}^{2d})$, the corresponding {\em localization operator} $L^{a}_{\psi, \gamma}$ is defined, for 
$f \in \mathcal{S}_{\omega}(\mathbb{R}^{d})$, by
\begin{equation}
\label{local}
L^{a}_{\psi, \gamma}f=V^{*}_{\gamma}(a \cdot V_{\psi} f)= \int_{\mathbb{R}^{2d}} a(x, \xi) V_{\psi} f(x, \xi) M_{\xi}T_{x} \gamma \,dx d \xi,
\end{equation}
where $V^{*}_{\gamma}$ is the adjoint of $V_{\gamma}$.
As in \cite[Lemma 2.4]{BCG} we have that $L^{a}_{\psi, \gamma}$ is a Weyl operator $L^{a^{w}}$ with symbol $a^{w}=a * \hbox{Wig}( \gamma, \psi)$:
\begin{equation}
\label{weyl}
L^{a^{w}}f:= \frac{1}{(2 \pi)^{d}} \int_{\mathbb{R}^{2d}} \hat{a}^{w}(\xi, u) 
e^{-i \xi \cdot u} T_{-u}M_{\xi}f \,du d \xi.
\end{equation}
Moreover, if $ f,g \in \mathcal{S}_{\omega}(\mathbb{R}^{d})$ then by definition of adjoint operator we can write
\begin{equation}
\nonumber
\langle L^{a}_{\psi, \gamma}f,g \rangle = \langle a \cdot V_{\psi}f, V_{\gamma}g \rangle =\langle a , \overline{ V_{\psi}f} V_{\gamma}g \rangle,
\end{equation}
and, similarly as in \cite[Thm. 14.5.2]{G} (see also \cite[Teo. 2.3.21]{D}), we have, for $a^{w} \in \textbf{M}^{\infty,1}_{m_{\mu}}( \mathbb{R}^{2d})$ with $ \mu \geq 0$, 
\begin{equation}
\label{in2}
 \| L^{a^{w}} f \|_{\textbf{M}^{p,q}_{m_{\lambda}}}= \| L^{a}_{\psi, \gamma} f \|_{\textbf{M}^{p,q}_{m_{\lambda}}} \leq \| a^{w} \|_{\textbf{M}^{\infty,1}_{m_{\mu}}} \| f \|_{\textbf{M}^{p,q}_{m_{\lambda}}},
\end{equation}
for all $ f \in \textbf{M}^{p,q}_{m_{\lambda}}$ and $ \lambda \in \mathbb{R}$.

\begin{Th}
\label{lim1}
Let $ \psi, \gamma \in \mathcal{S}_{\omega}(\mathbb{R}^{d}) \setminus \{0\}$ and $ a \in \textbf{M}^{\infty}_{m_{\lambda}}( \mathbb{R}^{2d})$ for some $ \lambda \geq 0$. Then $L^{a}_{\psi, \gamma}$ is bounded from $ \textbf{M}^{p,q}_{m_{\lambda}}(\mathbb{R}^{d})$ to  $ \textbf{M}^{p,q}_{m_{\lambda}}(\mathbb{R}^{d})$, for $1 \leq p,q < + \infty$, and
\begin{eqnarray*}
\| L^{a}_{\psi, \gamma} \|_{op}  \leq \| a \|_{\textbf{M}^{\infty}_{m_{- \lambda, 2}}} \| \psi \|_{\textbf{M}^{1}_{v_{\lambda}}} \| \gamma \|_{\textbf{M}^{p}_{m_{\lambda}}}.
\end{eqnarray*}
\end{Th}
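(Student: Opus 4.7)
The plan is to reduce the claim to the boundedness estimate \eqref{in2} for Weyl operators and then use Proposition \ref{24Gb} together with a standard factorization estimate for the cross-Wigner transform.

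First, since by the discussion preceding \eqref{weyl} the localization operator coincides with the Weyl quantization $L^{a^w}$ of the symbol $a^w = a * \W(\gamma,\psi)$, inequality \eqref{in2} shows that it is enough to prove $a^w\in\textbf{M}^{\infty,1}_{m_\lambda}(\mathbb R^{2d})$ and to control its norm. Thus the whole estimate reduces to bounding the convolution $a * \W(\gamma,\psi)$ in $\textbf{M}^{\infty,1}_{m_\lambda}$.

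Next I apply Proposition \ref{24Gb} on $\mathbb R^{2d}$ with the choice of indices $p=\infty$, $q=1$ (so $1/r=0$, i.e.\ $r=\infty$), $s=1$, $t=\infty$, $t'=1$, and the inner parameter of that proposition set to $0$. This yields the continuous embedding
\begin{equation*}
\textbf{M}^{\infty}_{m_{\lambda,1}}(\mathbb R^{2d}) * \textbf{M}^{1}_{m_{\lambda,1}\otimes v_{\lambda,2}}(\mathbb R^{2d}) \hookrightarrow \textbf{M}^{\infty,1}_{m_\lambda}(\mathbb R^{2d})
\end{equation*}
with the associated norm inequality. Since $|x|\le|(x,\xi)|$ and $\omega$ is increasing, subadditivity of $\omega$ (together with $\lambda\ge 0$) gives $m_{\lambda,1}(x)\le m_\lambda(x,\xi)$, hence $\textbf{M}^{\infty}_{m_\lambda}\hookrightarrow \textbf{M}^{\infty}_{m_{\lambda,1}}$, and the hypothesis $a\in\textbf{M}^\infty_{m_\lambda}$ produces an estimate for the first factor of the form $\|a\|_{\textbf{M}^\infty_{m_{-\lambda,2}}}$ after identifying the tensor-weight.

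The remaining ingredient is a factorization estimate for the cross-Wigner transform. Choosing a window $\Phi=\W\varphi$ with $\varphi\in\mathcal S_\omega(\mathbb R^d)\setminus\{0\}$, the identity $|V_\Phi\W(\gamma,\psi)(z,\zeta)| = |V_\varphi\gamma(A(z,\zeta))|\,|V_\varphi\psi(B(z,\zeta))|$, where $A,B$ are explicit linear symplectic maps, reduces the computation of $\|\W(\gamma,\psi)\|_{\textbf{M}^1_{m_{\lambda,1}\otimes v_{\lambda,2}}}$ to an integral of a product of STFTs of $\gamma$ and $\psi$. A change of variables and subadditivity of $\omega$ distribute the weight $m_{\lambda,1}\otimes v_{\lambda,2}$ into a factor $m_\lambda$ attached to $V_\varphi\gamma$ and a factor $v_\lambda$ attached to $V_\varphi\psi$, leading to
\begin{equation*}
\|\W(\gamma,\psi)\|_{\textbf{M}^1_{m_{\lambda,1}\otimes v_{\lambda,2}}} \lesssim \|\gamma\|_{\textbf{M}^p_{m_\lambda}}\,\|\psi\|_{\textbf{M}^1_{v_\lambda}}.
\end{equation*}
Combining this with the previous step and with \eqref{in2} gives the claimed operator-norm bound.

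The main obstacle is the bookkeeping in this last step: one has to track how the anisotropic tensor weight $m_{\lambda,1}\otimes v_{\lambda,2}$ on the phase space of $\W(\gamma,\psi)$ redistributes under the symplectic change of variables built into the STFT of a Wigner transform, and to verify that the factorized weights match exactly $m_\lambda$ on the $\gamma$-side and $v_\lambda$ on the $\psi$-side. Once this weight calculation is done, everything else is an assembly of \eqref{in2}, Proposition \ref{24Gb}, and subadditivity of $\omega$.
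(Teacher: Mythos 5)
Your overall strategy (pass to the Weyl symbol $a^w=a*\W(\gamma,\psi)$, invoke \eqref{in2}, and estimate the convolution via Proposition \ref{24Gb} plus a factorization of $V_\Psi\W(\gamma,\psi)$ into a product of STFTs of $\gamma$ and $\psi$) is the same as the paper's. However, your choice of parameters in Proposition \ref{24Gb} is the wrong one, and this creates a genuine gap in the operator-norm bound. You take the proposition's internal weight parameters so that the target is $\textbf{M}^{\infty,1}_{m_\lambda}$ and the first factor lies in $\textbf{M}^{\infty}_{m_{\lambda,1}}$. With that choice, \eqref{2632} produces the constant $\| a \|_{\textbf{M}^{\infty}_{m_{\lambda,1}}}$, and your remark that this becomes ``an estimate of the form $\|a\|_{\textbf{M}^{\infty}_{m_{-\lambda,2}}}$ after identifying the tensor-weight'' is not a valid step: since $\lambda\geq 0$ one has $m_{-\lambda,2}\leq 1\leq m_{\lambda,1}$, so $\| a \|_{\textbf{M}^{\infty}_{m_{-\lambda,2}}}\leq \| a \|_{\textbf{M}^{\infty}_{m_{\lambda,1}}}$ and the substitution goes in the forbidden direction. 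The theorem's stated bound, with the \emph{small} norm $\| a \|_{\textbf{M}^{\infty}_{m_{-\lambda,2}}}$, is strictly sharper than what your parameters can deliver. The paper instead applies Proposition \ref{24Gb} with its internal $\lambda=0$ and $\mu=-\lambda$, i.e.\ it proves $\textbf{M}^{\infty}_{m_{-\lambda,2}}*\textbf{M}^{1}_{m_{\lambda,2}}\hookrightarrow\textbf{M}^{\infty,1}$ (the \emph{unweighted} space) and then uses \eqref{in2} with $\mu=0$; it is precisely this choice that makes the weight $m_{-\lambda,2}$ land on $a$ and $m_{\lambda,2}$ on $\W(\gamma,\psi)$.

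A secondary consequence of the same misstep: the Wigner-factor estimate you would actually need is $\|\W(\gamma,\psi)\|_{\textbf{M}^{1}_{m_{\lambda,1}\otimes v_{\lambda,2}}}\lesssim\|\gamma\|_{\textbf{M}^{p}_{m_\lambda}}\|\psi\|_{\textbf{M}^{1}_{v_\lambda}}$, and the weight $m_{\lambda,1}\otimes v_{\lambda,2}$ (which grows in \emph{both} phase-space variables of $V_\Psi\W(\gamma,\psi)$) does not redistribute under the change of variables $z\pm\tilde\zeta/2$ into exactly $m_\lambda$ on the $\gamma$-side and $v_\lambda$ on the $\psi$-side; subadditivity gives roughly $v_{2\lambda}$ on each side. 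In the paper the weight is only $m_{\lambda,2}$ (frequency variable only), and the single subadditivity step $m_\lambda(\tilde\xi)\leq v_\lambda(\tilde z)\,m_\lambda(\tilde z-\tilde\xi)$ combined with Young's inequality gives exactly $\|\psi\|_{\textbf{M}^{1}_{v_\lambda}}\|\gamma\|_{\textbf{M}^{p}_{m_\lambda}}$, as in \eqref{art1c}. Your argument does prove \emph{boundedness} of $L^a_{\psi,\gamma}$ on $\textbf{M}^{p,q}_{m_\lambda}$ (with the larger constant $\| a \|_{\textbf{M}^{\infty}_{m_{\lambda,1}}}\|\W(\gamma,\psi)\|_{\textbf{M}^{1}_{m_{\lambda,1}\otimes v_{\lambda,2}}}$), but not the stated norm inequality; to get the latter you must redo the parameter selection as above.
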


\begin{proof}
By definition $V_{\psi}: \textbf{M}^{p,q}_{m_{\lambda}} \to 
L^{p,q}_{m_{\lambda}}( \mathbb{R}^{2d})$ and, by 
\cite[Prop. 3.7]{BJO2}, $V_{\gamma}^{*}: L^{p,q}_{m_{\lambda}}( \mathbb{R}^{2d}) \to \textbf{M}^{p,q}_{m_{\lambda}} (\mathbb{R}^{d})$. 
Let $f \in \textbf{M}^{p,q}_{m_{\lambda}}( \mathbb{R}^{d})$. To prove that 
$L^{a}_{\psi, \gamma}f=V^{*}_{\gamma}(a \cdot V_{\psi} f) \in 
\textbf{M}^{p,q}_{m_{\lambda}}$, it is then enough to show that 
$a \cdot V_{\psi} f \in L^{p,q}_{m_{\lambda}}(\mathbb{R}^{2d})$. 
By the inversion formula \cite[Prop. 3.7]{BJO2}, given two window functions 
$\Phi, \Psi \in \mathcal{S}_{\omega}(\mathbb{R}^{2d})$ with $ \langle \Phi, \Psi \rangle \neq 0$, 
we have, for $z=(z_1,z_2)\in\mathbb R^{2d}\times\mathbb R^{2d}$,
\begin{eqnarray*}
&& \biggl( \int_{\mathbb{R}^{d}} \biggl( \int_{\mathbb{R}^{d}} |a(x, \xi)|^{p} |V_{\psi}f(x, \xi)|^{p} e^{p \lambda \omega(x, \xi)} \, dx \biggl)^{\frac{q}{p}} \, d \xi \biggl)^{\frac{1}{q}}\\
& \leq& \frac{1}{(2 \pi)^{d}} \frac{1}{|\langle \Phi, \Psi \rangle| } \biggl(\int_{\mathbb{R}^{d}} \biggl( \int_{\mathbb{R}^{d}} \biggl( \int_{\mathbb{R}^{4d}} |V_{\Psi}a(z)|^{p}| M_{z_2}T_{z_1} \Phi(x, \xi)|^{p}  dz \biggl)\\
&& \cdot |V_{\psi}(x, \xi)|^{p} e^{p \lambda \omega(x, \xi)}  dx \biggl)^{\frac{q}{p}} d \xi \biggl)^{\frac{1}{q}}\\
& \leq & \frac{1}{(2 \pi)^{d}} \frac{1}{|\langle \Phi, \Psi \rangle| } \biggl(\int_{\mathbb{R}^{d}} \!  \biggl( \!  \int_{\mathbb{R}^{d}} \biggl(\!  \int_{\mathbb{R}^{4d}} \bigl( |V_{\Psi}a(z)| e^{\lambda \omega(z)}\bigl)^{p}| M_{z_2}T_{z_1} \Phi(x, \xi)|^{p}  dz \biggl)\\
&& \cdot |V_{\psi}(x, \xi)|^{p} e^{p \lambda \omega(x, \xi)}  dx \biggl)^{\frac{q}{p}} \!   d \xi \biggl)^{\frac{1}{q}}
\end{eqnarray*}
\begin{eqnarray*}
& \leq & C \| V_{\Psi}a \|_{L^{\infty}_{m_{\lambda}}}  \cdot \| V_{\psi} 
f \|_{L^{p,q}_{m_{\lambda}}}=C \| a \|_{\textbf{M}^{\infty}_{m_{\lambda}}} 
\cdot \| f \|_{\textbf{M}^{p,q}_{m_{\lambda}}},
\end{eqnarray*}
for some $C>0$.
Therefore $a \cdot V_{\psi} f \in L^{p,q}_{m_{\lambda}}(\mathbb{R}^{2d})$ and $L^{a}_{\psi,\gamma} f \in \textbf{M}^{p,q}_{m_{\lambda}}(\mathbb{R}^{d})$. 

To prove that $L^{a}_{\psi. \gamma}$ is bounded, consider $g \in  \mathcal{S}_{\omega}(\mathbb{R}^{d})$ and set $ \Psi=\W(g,g) \in \mathcal{S}_{\omega}(\mathbb{R}^{2d})$. For $ \xi=( \xi_{1}, \xi_{2}) \in \mathbb{R}^{2d}$, we set $ \tilde{\xi}=( \xi_{2},- \xi_{1})$. By \cite[Lemma 2.2]{CG}
\begin{eqnarray}
\nonumber
&& \| \W( \gamma, \psi) \|_{\textbf{M}^{1,p}_{m_{\lambda,2}}}=  
\| V_{\Psi} \W( \gamma, \psi) \|_{L^{1,p}_{m_{\lambda,2}}}\\
\nonumber
&&= \Big( \int_{\mathbb{R}^{2d}} \Big( \int_{\mathbb{R}^{2d}} 
\Big| V_{g} \psi \Big(z+ \frac{\tilde{\xi}}{2} \Big) V_{g} \gamma \Big(z- \frac{\tilde{\xi}}{2} \Big) \Big| \, dz \Big)^{p} m_{\lambda,2}^{p} (\xi)\, d \xi \Big)^{\frac{1}{p}}. \nonumber
\end{eqnarray}

By the change of variables $z+ \frac{\tilde{\xi}}{2}=\tilde{z}$ and \cite[formula (3.12)]{BJO2}
we obtain (cf. also \cite[Prop. 2.5]{CG}):
\begin{eqnarray}
\nonumber
\| \W( \gamma, \psi ) \|_{\textbf{M}^{1,p}_{m_{\lambda,2}}} &=& \biggl( \int_{\mathbb{R}^{2d}} \biggl( \int_{\mathbb{R}^{2d}} | V_{g} \psi ( \tilde{z})| | V_{g} \gamma (\tilde{z}- \tilde{\xi})| \, d \tilde{z} \biggl)^{p} m_{\lambda,2}^{p} (\xi)\, d \xi \biggl)^{\frac{1}{p}}.\\
\nonumber
& =& \biggl( \int_{\mathbb{R}^{2d}}(|V_{g} \psi(\tilde z)|*|V_{g} \gamma(-\tilde z)|)^{p}(\tilde{\xi}) \,m_{\lambda,2}^{p}( \tilde{\xi}) \, d \tilde{\xi} \biggl)^{\frac{1}{p}}\\
\label{art1c}
&& 
 \leq 
 \| V_{g} \psi\|_{L^{1}_{v_{\lambda}}}\| V_{g}\gamma\|_{L^{p}_{m_{\lambda}}} = 
 \| \psi \|_{\textbf{M}^{1}_{v_{\lambda}}} \| \gamma \|_{\textbf{M}^{p}_{m_{\lambda}}}.
\end{eqnarray}
Therefore
$ \W(\gamma,\psi) \in \textbf{M}^{1}_{m_{\lambda,2}}( \mathbb{R}^{2d})$ and hence, from Proposition \ref{24Gb} (with $p=t=r= + \infty$, $ q=s=t'=1$, $ \lambda=0$ and $ \mu=- \lambda$), we have that $\textbf{M}^{\infty}_{m_{- \lambda,2}}* \textbf{M}^{1}_{m_{\lambda,2}} \subseteq \textbf{M}^{\infty, 1}$, so that $ a^{w}=a*\W( \gamma, \psi) \in \textbf{M}^{\infty,1}$ and by \eqref{in2} with $ \mu=0$
\begin{equation}
\nonumber
\| L^{a}_{\psi, \gamma} \|_{op} \leq \| a^{w} \|_{\textbf{M}^{\infty,1}}.
\end{equation}
From \eqref{2632} and \eqref{art1c} we finally have
\begin{eqnarray}
\nonumber
\| L^{a}_{\psi, \gamma} \|_{op} && \leq  \|  a* \W( \gamma, \psi ) \|_{\textbf{M}^{\infty,1}} \leq \| a \|_{\textbf{M}^{\infty}_{m_{- \lambda, 2}}} \| \W( \gamma, \psi) \|_{\textbf{M}^{1}_{m_{\lambda, 2}}} \\
\nonumber
&& \leq \| a \|_{\textbf{M}^{\infty}_{m_{- \lambda, 2}}} \| \psi \|_{\textbf{M}^{1}_{v_{\lambda}}} \| \gamma \|_{\textbf{M}^{p}_{m_{\lambda}}}.
\end{eqnarray}
\end{proof}

A boundedness result analogous to that of Theorem~\ref{lim1} is proved, with different techniques, in
\cite{P} under further restrictions on the symbol $a(x,\xi)$ and without estimates on the norm
of  $L^a_{\psi,\gamma}$.

Set now
$$ 
\textbf{M}_{m_{\lambda}}^{0,1}(\mathbb{R}^{d})= \{f \in \textbf{M}^{\infty,1}_{m_{\lambda}}(\mathbb{R}^{d}): \lim_{|x| \to \infty} \| V_{g}f(x,.) \|_{L^{1}_{m_{\lambda}}} e^{\lambda \omega(x)}=0 \}
$$
and prove the following compactness result (cf. also \cite[Prop. 2.3]{BGHO} and
\cite[Thm. 3.22]{FGP}):
\begin{Th}
\label{gh1}
If $a^{w} \in \textbf{M}_{m_{\lambda}}^{0,1}( \mathbb{R}^{2d})$ for some $ \lambda\geq 0$, then $L^{a^{w}}$ is a compact mapping of $ \textbf{M}^{p,q}_{m_{\lambda}}(\mathbb{R}^{d})$ into itself, for $ 1 \leq p,q < + \infty$.
\end{Th}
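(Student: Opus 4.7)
The plan is to prove compactness by approximating $L^{a^w}$ in operator norm by a sequence of compact operators, exploiting the fact that compact operators form a closed subspace of the bounded operators in the operator norm. The approximation is achieved by replacing $a^w$ by a sequence of symbols in $\mathcal{S}_\omega(\mathbb{R}^{2d})$, for which compactness of the associated localization operators is manifest.

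The first step is to show that $\mathcal{S}_\omega(\mathbb{R}^{2d})$ is dense in $\textbf{M}^{0,1}_{m_\lambda}(\mathbb{R}^{2d})$ with respect to the $\textbf{M}^{\infty,1}_{m_\lambda}$-norm. Given $a^w \in \textbf{M}^{0,1}_{m_\lambda}$, the vanishing condition $\|V_g a^w(x,\cdot)\|_{L^1_{m_\lambda}} e^{\lambda\omega(x)} \to 0$ as $|x|\to\infty$ permits a truncation/mollification procedure: using the STFT inversion formula, a compactly-supported truncation of $V_g a^w$ in the $x$-variable corresponds to a modification of $a^w$ whose STFT approximates $V_g a^w$, and the vanishing condition controls the tail in $\textbf{M}^{\infty,1}_{m_\lambda}$. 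A subsequent convolution with an $\mathcal{S}_\omega$-mollifier yields $a^w_n \in \mathcal{S}_\omega(\mathbb{R}^{2d})$ with $\|a^w - a^w_n\|_{\textbf{M}^{\infty,1}_{m_\lambda}} \to 0$.

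The second step is to show that $L^{a^w_n}$ is compact on $\textbf{M}^{p,q}_{m_\lambda}(\mathbb{R}^d)$ for each $a^w_n \in \mathcal{S}_\omega(\mathbb{R}^{2d})$. The Weyl operator with a Schwartz-type symbol has Schwartz kernel $K_n \in \mathcal{S}_\omega(\mathbb{R}^{2d})$. By nuclearity of $\mathcal{S}_\omega$, $K_n$ admits an absolutely convergent tensor-product expansion $K_n = \sum_j c_j\, u_j \otimes v_j$ with $(c_j) \in \ell^1$ and $\{u_j\}, \{v_j\}$ bounded in $\mathcal{S}_\omega$. Since $\mathcal{S}_\omega \hookrightarrow \textbf{M}^{p,q}_{m_\lambda}$ and $\mathcal{S}_\omega \hookrightarrow (\textbf{M}^{p,q}_{m_\lambda})^*$, each rank-one summand $f \mapsto \langle f, \overline{v_j}\rangle u_j$ is bounded on $\textbf{M}^{p,q}_{m_\lambda}$ with uniform operator-norm bound, and the series converges in operator norm to $L^{a^w_n}$. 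Hence $L^{a^w_n}$ is a norm limit of finite-rank operators and is therefore compact.

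Combining the two steps, inequality \eqref{in2} applied to $a^w - a^w_n$ with $\mu=\lambda\geq 0$ yields
$$
\|L^{a^w} - L^{a^w_n}\|_{op} \leq \|a^w - a^w_n\|_{\textbf{M}^{\infty,1}_{m_\lambda}} \longrightarrow 0,
$$
so $L^{a^w}$ is a limit in operator norm of compact operators, and is itself compact on $\textbf{M}^{p,q}_{m_\lambda}$. I expect the density statement in Step 1 to be the main obstacle: while the defining condition of $\textbf{M}^{0,1}_{m_\lambda}$ is precisely tailored to permit such approximation, executing it rigorously requires carefully tracking how truncation on the STFT side translates into a perturbation of $a^w$ in the $\textbf{M}^{\infty,1}_{m_\lambda}$-norm. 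A cleaner route may use Gabor-frame discretization of the modulation-space norms, in which the analogue of the $\textbf{M}^{0,1}$-condition becomes the vanishing of a sequence of coefficients and the density of finitely-supported sequences is immediate.
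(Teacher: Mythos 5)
Your proposal follows essentially the same route as the paper: reduce to symbols in $\mathcal{S}_\omega(\mathbb{R}^{2d})$ via the operator-norm estimate \eqref{in2} together with the closedness of the set of compact operators, and show that for such smooth symbols the Weyl operator is nuclear (hence compact) by expanding its $\mathcal{S}_\omega$ kernel into an absolutely summable series of rank-one tensors acting between $\textbf{M}^{p,q}_{m_\lambda}$ and its dual. The only differences are cosmetic: the paper realizes the kernel expansion concretely through a tight Gabor frame for $L^{2}(\mathbb{R}^{2d})$ rather than invoking abstract nuclearity of $\mathcal{S}_\omega$, and it disposes of your Step 1 (density of $\mathcal{S}_\omega$ in $\textbf{M}^{0,1}_{m_\lambda}$ for the $\textbf{M}^{\infty,1}_{m_\lambda}$-norm) by citing \cite[Prop.~3.9]{BJO2} instead of carrying out the truncation argument you sketch.
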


\begin{proof}
The operator $L^{a^{w}}$ maps $ \textbf{M}^{p,q}_{m_{\lambda}}(\mathbb{R}^{d})$ into itself by \eqref{in2}. To prove that $L^{a^{w}}$ is compact we first assume $ a^{w} \in \mathcal{S}_{\omega}( \mathbb{R}^{2d})$. From \eqref{weyl}
\begin{eqnarray}
\label{81}
\nonumber
L^{a^{w}}f(y) =&&  \frac{1}{(2 \pi)^{d}} \int_{\mathbb{R}^{2d}} \hat{a}^{w}(\xi,u) e^{-i \xi \cdot u} e^{i \xi \cdot (y+u)} f(y+u) \, du \, d \xi \\ \nonumber
=&&  \frac{1}{(2 \pi)^{d}} \int_{\mathbb{R}^{2d}} \hat{a}^{w}(\xi,x-y) e^{i \xi \cdot y} f(x) \, dx \, d \xi\\ 
=&& \int_{\mathbb{R}^{d}} k(x,y) f(x) \, dx, 
\end{eqnarray}
with kernel $k(x, y)= \frac{1}{(2 \pi)^{d}}\int_{\mathbb{R}^{d}} \hat{a}^{w}(\xi,x-y) 
e^{i \xi \cdot y} d \xi$.
Note that $k(x,y) \in \mathcal{S}_{\omega }( \mathbb{R}^{2d})$ because it is the inverse Fourier transform (with respect to the first variable) of the traslation (with respect to the second variable) of $ \hat{a}^{w} \in \mathcal{S}_{\omega}( \mathbb{R}^{2d})$.
\\ Now, let $ \phi \in \mathcal{S}_{\omega}(\mathbb{R}^{d})$ and $ \alpha_{0}, \beta_{0}>0$ such that $\{ \phi_{jl} \}_{j,l \in \mathbb{Z}^{d}}= \{  M_{\beta_{0}l}T_{\alpha_{0}j} \phi \}_{j,l \in \mathbb{Z}^{d}} $ is a tight Gabor frame for $L^{2}(\mathbb{R}^{d})$ (see \cite[Def. 5.1.1]{G} for the definition). Then $ \{ \Phi_{jlmn} \}_{j,l,m,n \in \mathbb{Z}^{d}}=\{ \phi_{jl}(x) \phi_{mn}(y) \}_{j,l,m,n \in \mathbb{Z}^{d}}$ is a tight Gabor frame for $L^{2}( \mathbb{R}^{2d})$. Since $ k \in \mathcal{S}_{\omega}(\mathbb{R}^{2d})$ we have that $ \langle k, \Phi_{jlmn} \rangle=V_{\phi}k( \alpha_{0}j, \alpha_{0}m, \beta_{0}l, \beta_{0} n) \in \ell^{1}$ and (see \cite[Lemma 3.15]{BJO2})
$$ k= \sum_{j,l,m,n \in \mathbb{Z}^{d}} \langle k, \Phi_{jlmn} \rangle \Phi_{jlmn}.$$ 
Therefore from \eqref{81}
$$ L^{a^{w}}f=\sum_{j,l,m,n \in \mathbb{Z}^{d}} \langle k, \Phi_{jlmn} \rangle \langle \phi_{jl},f  \rangle \phi_{mn},$$
with $ \langle k, \Phi_{jlmn} \rangle \in \ell^{1}$, $ (\phi_{jl})_{j,l \in \mathbb{Z}^{d}}$ equicontinous in $\textbf{M}^{p',q'}_{m_{-\lambda}}= (\textbf{M}^{p,q}_{m_{\lambda}})^{*}$ and $ (\phi_{mn})_{m,n \in  \mathbb{Z}^{d}}$ bounded in $\bigcup_{n \in \mathbb{N}} n \{ f \in \textbf{M}^{\, p,q}_{m_{\lambda}}: \| f \|_{\textbf{M}^{p,q}_{m_{\lambda}}} <1 \},$ so that $L^{a^{w}}$ is a nuclear operator from $\textbf{M}^{p,q}_{m_{\lambda}}$ to $\textbf{M}^{p,q}_{m_{\lambda}}$ (see \cite[\S 17.3]{J}). From \cite[\S 17.3, Cor.~4]{J} we thus have that $L^{a^{w}}$ is compact.
\\Let us finally consider the general case $a \in \textbf{M}^{0,1}_{m_{\lambda}}(\mathbb{R}^{2d})$. By \cite[Prop. 3.9]{BJO2} there exist 
$a_{n} \in \mathcal{S}_{\omega}(\mathbb{R}^{2d})$ converging to $a$ in $ \textbf{M}^{\infty,1}_{m_{\lambda}}$ and hence, by \eqref{in2}
$$ \|  L^{a^w}-  L^{a^w_{n}} \|_{\textbf{M}_{m_{\lambda}}^{\,p,q} \to \textbf{M}_{m_{\lambda}}^{\,p,q}} \leq  \| a- a_{n} \|_{\textbf{M}^{\infty, 1}_{m_{\lambda}}} \to 0.$$
Since the set of compact operators is closed we have that $ L^{a^w}$ is compact on $ \textbf{M}_{m_{\lambda}}^{\,p,q}(\mathbb{R}^{d}).$
\end{proof} 

We have the
following generalization of \cite[Lemma~3.4]{FG} and \cite[Prop.~5.2]{FG2}:
\begin{Lemma}
\label{BD100}
Let $g_{0} \in \mathcal{S}_{\omega}(\mathbb{R}^{d})$ and $ a \in \textbf{M}^{\infty}_{m_{\lambda}}(\mathbb{R}^{d})$, with $ \lambda \geq0$, such that
\newline
\begin{equation}
\label{2M2}
\lim_{|x| \to + \infty} \sup_{| \xi | \leq R} |V_{g_{0}} a(x, \xi)| e^{\lambda \omega(x,\xi)}=0,
\qquad \forall  R>0.
\end{equation}
Then $a*H \in \textbf{M}_{m_{\lambda}}^{0,1}(\mathbb{R}^{d})$ for any $ H \in \mathcal{S}_{\omega}(\mathbb{R}^{d})$.
\end{Lemma}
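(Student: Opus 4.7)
The plan is to establish two things: that $a * H$ belongs to $\textbf{M}^{\infty,1}_{m_\lambda}$, and that it satisfies the vanishing condition defining $\textbf{M}^{0,1}_{m_\lambda}$. For the former, since $a \in \textbf{M}^\infty_{m_\lambda} \subseteq \textbf{M}^\infty_{m_{\lambda,1}}$ and $H \in \mathcal{S}_\omega \subseteq \textbf{M}^1_{m_{\lambda,1} \otimes v_{\lambda,2}}$, Proposition \ref{24Gb} applied with $p = \infty$, $q = 1$, $r = \infty$, $s = 1$, $t = \infty$, $t' = 1$ and $\mu = 0$ gives $a * H \in \textbf{M}^{\infty,1}_{m_\lambda}$ directly.

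For the vanishing, I would use the window $g = g_0 * g_0 \in \mathcal{S}_\omega$ as in the proof of Proposition \ref{24Gb}. The identity $a * M_\xi g_0(x) = e^{ix\xi} V_{g_0} a(x,\xi)$ (since $g_0$ is even and real) combined with \cite[Lemma~3.1.1]{G} yields
\[
|V_g(a*H)(x,\xi)| \leq \bigl(|V_{g_0} a(\cdot,\xi)| * |V_{g_0} H(\cdot,\xi)|\bigr)(x).
\]
Next I would use the subadditivity inequality $\omega(x) \leq \omega((y,\xi)) + \omega(x-y)$, which follows from $|x| \leq |(y,\xi)| + |x-y|$, to obtain
\[
e^{\lambda\omega(x)}\|V_g(a*H)(x,\cdot)\|_{L^1_{m_\lambda}} \leq \iint F(y,\xi)\,K(x-y,\xi)\,dy\,d\xi,
\]
with $F(y,\xi) := |V_{g_0} a(y,\xi)|\,m_\lambda(y,\xi)$ uniformly bounded by $\|a\|_{\textbf{M}^\infty_{m_\lambda}}$ and tending to $0$ in $y$ uniformly on compact $\xi$-sets thanks to \eqref{2M2}, and $K(u,\xi) := m_\lambda(u)\,m_\lambda(\xi)\,|V_{g_0} H(u,\xi)|$ belonging to $\mathcal{S}_\omega(\mathbb{R}^{2d}) \subseteq L^1(\mathbb{R}^{2d})$.

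To finish, given $\varepsilon > 0$ I would split the $(y,\xi)$-integration into three regions. On $|x-y| > |x|/2$ the rapid decay of $K$ in its first variable controls the integral; on $|x-y| \leq |x|/2$ with $|\xi| > R$ the $\xi$-decay of $K$ takes over; and on $|x-y| \leq |x|/2$ with $|\xi| \leq R$ one has $|y| \geq |x|/2$, so \eqref{2M2} makes $F(y,\xi)$ uniformly small. Fixing first $R$ so that the second region contributes at most $\varepsilon/3$, uniformly in $x$, and then sending $|x| \to \infty$ makes the other two regions each $< \varepsilon/3$.

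The main obstacle is the choice of weight decomposition. A coarser splitting like $\omega(x) \leq \omega(y) + \omega(x-y)$ would leave an unwanted $m_\lambda(y)$ factor attached to $F$ and would force $a \in \textbf{M}^\infty_{m_{2\lambda}}$, beyond the hypothesis. Using instead $\omega(x) \leq \omega((y,\xi)) + \omega(x-y)$ lets $m_\lambda((y,\xi))$ combine exactly with $|V_{g_0} a|$ to produce the bounded $F$, while only $m_\lambda(x-y)\,m_\lambda(\xi)$ is transferred onto $V_{g_0} H$, which absorbs such weights because $H \in \mathcal{S}_\omega$.
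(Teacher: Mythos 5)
Your proof is correct and follows essentially the same route as the paper's: the convolution identity $|V_{g_0*g_0}(a*H)(\cdot,\xi)|\le |V_{g_0}a(\cdot,\xi)|*|V_{g_0}H(\cdot,\xi)|$, the rapid decay of $V_{g_0}H\in\mathcal{S}_\omega(\mathbb{R}^{2d})$, a subadditivity splitting that places exactly the weight $m_\lambda$ on $V_{g_0}a$, and a decomposition of the integration domain in which \eqref{2M2} controls the region $\{|\xi|\le R,\ |y|\ge |x|/2\}$. The only differences are organizational: you get the $\textbf{M}^{\infty,1}_{m_\lambda}$ membership from Proposition~\ref{24Gb} rather than from the pointwise bound, and by using the $\xi$-decay of $V_{g_0}H$ (instead of the factor $e^{-\ell\lambda\omega(\xi)}$, which forces $\lambda>0$) your argument covers $\lambda=0$ directly rather than by citation.
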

\begin{proof}
The case $\lambda=0$ has been proved in \cite[Lemma~3.4]{FG}. Let $\lambda>0$.
Since $g_{0} \in \mathcal{S}_{\omega}(\mathbb{R}^{d})$ and  
$H \in \mathcal{S}_{\omega}(\mathbb{R}^{d})$, by 
\cite[Thm. 2.7]{GZ2} we have that 
$V_{g_{0}}H \in \mathcal{S}_{\omega}( \mathbb{R}^{2d})$ and hence, for a fixed $ \ell>0$ (to be chosen later depending on $\lambda$),
there exists $c_{\lambda}>0$ such that
\begin{equation}
\nonumber
|V_{g_{0}}H(x, \xi)| \leq c_{\lambda} e^{-3 \ell \lambda \omega(x)} 
e^{-3 \ell \lambda \omega( \xi)}, \qquad \forall x , \xi \in \mathbb{R}^{d}.
\end{equation}
Now, as in the proof of Proposition \ref{24Gb}, for $g=g_{0}*g_{0}$, we have that 
$| V_{g}(a*H)(\cdot, \xi)|= |V_{g_{0}}a( \cdot, \xi)*V_{g_{0}}H( \cdot, \xi)|$. 
Since $ \omega$ is increasing and subadditive we have
\begin{eqnarray*}
&& | V_{g}(a*H)(x, \xi)|  \leq  \int_{\mathbb{R}^{d}} | V_{g_{0}}a(x-y, \xi )| | V_{g_{0}} H(y, \xi)| dy \\
&& \leq c_{\lambda} e^{-3 \ell \lambda \omega(\xi )}  \!  \int_{\mathbb{R}^{d}} \! | V_{g_{0}}a(x-y, \xi )| e^{-3 \ell \lambda \omega(y)} dy \\
&& = c_{\lambda} e^{-3 \ell \lambda \omega(\xi )}  \!  \int_{\mathbb{R}^{d}} \! | V_{g_{0}}a(x-y, \xi )| e^{-3 \ell \lambda \omega(y)} \, e^{\lambda\omega(x-y,\xi)}e^{-\lambda\omega(x-y,\xi) } dy \\
&& \leq c_{\lambda} e^{-3 \ell\lambda \omega(\xi )} e^{- \lambda \omega(x)} \!  \int_{\mathbb{R}^{d}} \!| V_{g_{0}}a(x-y, \xi )| e^{\lambda \omega(x-y, \xi)}e^{-(3 \ell-1) \lambda \omega(y)} dy.
\end{eqnarray*}
Since $a \in \textbf{M}^{\infty}_{m_{\lambda}}(\mathbb{R}^{d})$  we have that
\begin{eqnarray}
\nonumber
&& e^{\lambda \omega(x)+ 2 \ell\lambda \omega(\xi)} | V_{g}(a*H)(x, \xi)|\\  
\label{ultimo} 
\leq && c_{\lambda} e^{- \ell \lambda  \omega(\xi)}  \int_{\mathbb{R}^{d}} | V_{g}a(x-y, \xi )| e^{\lambda \omega(x-y, \xi)}e^{-(3 \ell-1) \lambda \omega(y)}  dy \\ 
\label{2M1}
 \leq && c_{\lambda} e^{- \ell \lambda \omega(\xi)}  \| a \|_{\textbf{M}^{\infty}_{m_{\lambda}}} \int_{\mathbb{R}^{d}} e^{-(3 \ell-1) \lambda \omega(y)} dy< + \infty,
\end{eqnarray}
if $\ell >\frac{1}{3}+ \frac{d}{3 B \lambda}$, where B is the constant of condition ($ \gamma$) in Definition \ref{peso}. Since $ \lim_{|\xi| \to + \infty} \omega( \xi)=+ \infty$, from \eqref{2M1} we have that for all $ \varepsilon>0$ there exists $R_{1}>0$ such that
\begin{equation}
\label{82}
 e^{\lambda \omega(x)+ 2 \ell\lambda \omega(\xi)}  | V_{g}(a*H)(x, \xi)| < \varepsilon, \quad \forall x, \xi \in \mathbb{R}^{d}, \quad  | \xi| \geq R_{1}.
\end{equation}
We now choose $\delta>0$ small enough so that
\begin{equation}
\label{spa1}
\delta \biggl(1 + c_{\lambda} \int_{\mathbb{R}^{d}} e^{-(3\ell-1) \lambda \omega(y)}\biggl)  dy \leq \varepsilon.
\end{equation}
From the hypothesis \eqref{2M2} we can choose $R_{2}>0$ sufficiently large so that
\begin{equation}
\label{spa3}
 \sup_{| \xi| \leq R_{1}} | V_{g_{0}}a(x, \xi)| e^{\lambda \omega(x, \xi)}< \delta, \quad  |x| \geq R_{2},
\end{equation}
\begin{equation}
\label{spa2}
\int_{|y| > R_{2}} e^{-(3 \ell-1) \lambda \omega(y)} \, dy < \frac{\delta}{c_{\lambda}
e^{- \ell \lambda \omega(\xi) }\| a \|_{\textbf{M}_{m_{\lambda}}^{\infty}}}, \qquad | \xi| \leq R_{1}.
\end{equation}
Therefore for $|x| \geq 2 R_{2}$, $| y | \leq R_{2}$ (so that $ |x-y| \geq R_{2}$) and $ | \xi| \leq R_{1}$, by \eqref{ultimo}, \eqref{2M1},  \eqref{spa2}, \eqref{spa3} and \eqref{spa1}:
\begin{eqnarray*}
&& e^{\lambda \omega(x)+ 2 \ell\lambda \omega(\xi)}  | V_{g}(a*H)(x, \xi)| \\ 
 \leq && c_{\lambda} e^{ - \ell \lambda \omega(\xi)}  \| a \|_{\textbf{M}_{m_{\lambda}}^{\infty}} \int_{|y| > R_{2}} e^{-(3 \ell-1) \lambda \omega(y)}  dy \\
&& + c_{\lambda} e^{-\ell \lambda \omega(\xi )} \int_{|y| \leq R_{2}} | V_{g_{0}}a(x-y, \xi )| e^{ \lambda \omega(x-y, \xi)} e^{- (3 \ell-1) \lambda \omega(y)}  dy \\
 < && \delta + c_{\lambda} \delta \int_{\mathbb{R}^{d}} e^{-(3 \ell-1) \lambda \omega(y)}  dy \leq \varepsilon.
\end{eqnarray*}
The above estimate, together with \eqref{82}, gives
$$ 
e^{\lambda \omega(x)} \int_{\mathbb{R}^{d}} | V_{g}(a*H)(x, \xi)| e^{\lambda \omega(\xi)}  
d \xi \leq \varepsilon \int_{\mathbb{R}^{d}} e^{- (2 \ell-1) \lambda \omega(\xi)} 
d \xi, \qquad |x| \geq 2 R_{2}.
$$
Choosing now $ \ell >\frac{1}{2}+ \frac{d}{2 B \lambda}>\frac{1}{3}+ \frac{d}{3 B \lambda}$ so that $e^{-(2 \ell-1) \lambda \omega(\xi)} \in L^{1}(\mathbb{R}^{d})$, we finally obtain
$$
 \lim_{|x| \to \infty}  e^{\lambda \omega(x)}  \| V_{g}(a*H)(x,.) \|_{L^{1}_{m_{\lambda}}}=0.
 $$
\end{proof}

\begin{Th}
\label{cond1}
Let $\psi, \gamma \in \mathcal{S}_{\omega}(\mathbb{R}^{d})$, $g_{0} \in \mathcal{S}_{\omega}(\mathbb{R}^{2d})$ and $a \in \textbf{M}^{\infty}_{m_{\lambda}}(\mathbb{R}^{2d})$  satisfying \eqref{2M2}, for some $\lambda\geq0$. Then  $L^{a}_{\psi, \gamma}: \textbf{M}_{m_{\lambda}}^{p,q}(\mathbb{R}^{d})\to\textbf{M}_{m_{\lambda}}^{p,q}(\mathbb{R}^{d})$
 is  compact, for $1\leq p,q<+\infty$.
\end{Th}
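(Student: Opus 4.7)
The plan is to reduce the statement to the compactness criterion of Theorem \ref{gh1}, using Lemma \ref{BD100} as the bridge. Recall from \eqref{weyl} that $L^{a}_{\psi,\gamma}=L^{a^{w}}$ with Weyl symbol $a^{w}=a*\W(\gamma,\psi)$ on $\mathbb{R}^{2d}$. Hence, to invoke Theorem \ref{gh1} (applied in dimension $2d$), it suffices to show that $a^{w}\in\textbf{M}^{0,1}_{m_{\lambda}}(\mathbb{R}^{2d})$.

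First I would observe that $\W(\gamma,\psi)\in\mathcal{S}_{\omega}(\mathbb{R}^{2d})$: this is the continuity of the cross-Wigner transform on $\mathcal{S}_{\omega}$ already noted after Definition \ref{SW2}. Then Lemma \ref{BD100}, applied on $\mathbb{R}^{2d}$ with $H:=\W(\gamma,\psi)\in\mathcal{S}_{\omega}(\mathbb{R}^{2d})$ and with the same symbol $a\in\textbf{M}^{\infty}_{m_{\lambda}}(\mathbb{R}^{2d})$ satisfying the decay hypothesis \eqref{2M2}, yields immediately $a^{w}=a*H\in\textbf{M}^{0,1}_{m_{\lambda}}(\mathbb{R}^{2d})$.

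Finally I would invoke Theorem \ref{gh1}, in dimension $2d$, which gives that $L^{a^{w}}$ is a compact map of $\textbf{M}^{p,q}_{m_{\lambda}}(\mathbb{R}^{d})$ into itself for $1\leq p,q<+\infty$. Since $L^{a}_{\psi,\gamma}=L^{a^{w}}$, this is exactly the desired conclusion.

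The only point requiring a little care is that Lemma \ref{BD100} is stated on $\mathbb{R}^{d}$, while here it must be invoked on $\mathbb{R}^{2d}$, with the window $g_{0}\in\mathcal{S}_{\omega}(\mathbb{R}^{2d})$ provided in the hypotheses and with the weight $\omega(z)=\omega(|z|)$ on $\mathbb{R}^{2d}$; but the proof of Lemma \ref{BD100} uses only the general properties $(\alpha)$--$(\delta)$ of $\omega$ and the structure of $\mathcal{S}_{\omega}$, so it transfers verbatim. No genuine obstacle arises; the argument is essentially a three-line chain: Weyl representation, application of Lemma \ref{BD100}, application of Theorem \ref{gh1}.
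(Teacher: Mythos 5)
Your proposal is correct and follows exactly the paper's own argument: set $H=\W(\gamma,\psi)\in\mathcal{S}_{\omega}(\mathbb{R}^{2d})$, apply Lemma \ref{BD100} (in dimension $2d$) to conclude $a^{w}=a*H\in\textbf{M}^{0,1}_{m_{\lambda}}(\mathbb{R}^{2d})$, and then invoke Theorem \ref{gh1}. Your remark about transferring Lemma \ref{BD100} from $\mathbb{R}^{d}$ to $\mathbb{R}^{2d}$ is a point the paper passes over silently, and it is handled correctly.
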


\begin{proof}
Set $H:= W( \gamma, \psi) \in \mathcal{S}_{\omega}( \mathbb{R}^{2d})$. Since $a \in \textbf{M}^{\infty}_{m_{\lambda}}(\mathbb{R}^{2d})$, by Lemma \ref{BD100} we have that $ a^{w}= a* H \in \textbf{M}_{m_{\lambda}}^{0,1}(\mathbb{R}^{2d})$ 
and hence $L^{a}_{\psi, \gamma}=L^{a^{w}}$ is compact by Theorem~\ref{gh1}.
\end{proof}

\textbf{Acknowledgements}
The authors are grateful to Proff. C.~Fern\'andez, A.~Galbis and D.~Jornet for  helpful discussions. The first author is member of the GNAMPA-INdAM.

\vspace*{3mm}
\begin{center}
Dipartimento di Matematica e Informatica \\Universit\`a di Ferrara,
Via Ma\-chia\-vel\-li n.~30,
I-44121 Ferrara,
Italy
\\
\email{chiara.boiti@unife.it}.
\end{center}

\vspace*{2mm}

\begin{center}
Dipartimento di Matematica \\ Politecnico di Milano,
Via Bonardi n.~9,
20133 Milano,
Italy,
\\
\email{antonino.demartino@polimi.it}.
\end{center}

\end{document}